\renewcommand{\leq}{\leqslant}
\renewcommand{\geq}{\geqslant}
\newcommand{\N}{{\mathbb N}}
\newcommand{\R}{{\mathbb R}}
\newcommand{\Ff}{{\mathbb F}}
\newcommand{\F}{\mathfrak{F}}
\newcommand{\Oo}{\mathcal{O}}
\newcommand{\B}{\mathfrak{B}}
\newcommand{\TP}{\text{TP}}
\newcommand{\Int}{\text{Int}}
\theoremstyle{plain}
\newtheorem{theorem}{Theorem}
\newtheorem{proposition}{Proposition}
\newtheorem{lemma}[theorem]{Lemma}
\theoremstyle{example}
\theoremstyle{remark}
\newtheorem{remark}[theorem]{Remark}
\theoremstyle{definition}
\newtheorem{definition}[theorem]{Definition}
\numberwithin{equation}{section}
\newcommand{\TFAE}{the following assertions are equivalent: }
\begin{document}
	\title[On integration with respect to filter]{On integration with respect to filter}
	\author{Dmytro Seliutin}
	\address{School of Mathematics and Informatics V.N. Karazin Kharkiv  National University,  61022 Kharkiv, Ukraine}
	\email{seliutin2020pg@student.karazin.ua}
	\keywords{integral, filter, ideal}
	
	\begin{abstract}
		Using a concept of filter we propose one generalization of Riemann integral, that is integration with respect to filter. We study this problem, demonstrate different properties and phenomena of filter integration.
	\end{abstract}
	
	\maketitle
	\section{Introduction}
	Let us remind main concepts which we use in this paper. Throughout this article $\Omega$ stand for a non-empty set. Non-empty family of subsets $\F \subset 2^\Omega$ is called \textit{filter on $\Omega$}, if $\F$ satisfies the following axioms:
	\begin{enumerate}
		\item $\emptyset \notin \F$;
		\item if $A,\ B \in \F$ then $A \cap B \in \F$;
		\item if $A \in \F$ and $D \supset A$ then $D \in \F$.
	\end{enumerate}
	
	Also very useful for us is a concept of filter base. Non-empty family of subsets $\B \subset 2^\Omega$ is called \textit{filter base on $\Omega$}, if $\emptyset \notin \B$ and for every $A,\ B \in \B$ there exists $C \in \B$ such that $C \subset A \cap B$. We say that filter base \textit{generates filter $\F$} if and only if for each $A \in \F$ there is $B \in \B$ such that $B \subset A$.
	
	Let $X$ be a topological vector space, $f: X \rightarrow \R$ be a function. For $t \in X$ denote $\Oo(t)$ the family of all neighbourhoods of $t$. Let $\F$ be a filter on $X$, $y \in \R$. Function $f$ is said to be \textit{convergent to $y$ over filter $\F$} (denote $y = \lim\limits_{\F} f$), if for each $U \in \Oo(y)$ there exists $A \in \F$ such that for each $t \in A$ the following holds true: $f(t) \in U$. We refers, for example, to \cite{kadets} for more information about filter and related concepts. 
	
	The concept of filter is a very powerful tool for studying different properties of general topological vector spaces. For example, in \cite{(2)listan-garcia} authors study convergence over ideal, generated by the modular function. Ideal is a concept dual to filter. In \cite{kad_sel_comp} we study completeness and its generalization using filters. 
	
	In this this article we refer our attention to classical Riemann integral. Let us remind how we can construct this object. Let $[a,b] \subset \R$, let $f: [a,b] \rightarrow \R$ be a continuous function. Denote $\Pi = \{a \leq \xi_1 \leq \xi_2 \leq ...\leq \xi_n = b\}$ the partition of $[a,b]$, in other words, $\overset{n}{\underset{k=1}{\cup}} [\xi_{k-1}, \xi_k] = [a,b]$. Consider also the set $T = \{t_1, t_2, ...,t_n\}$ such that for each $k=1,2,...,n$ $t_k \in [\xi_{k-1}, \xi_k]$. Let us call the pair $(\Pi, T)$ by the \textit{tagged partition on the segment}. Denote $d(\Pi)$ \textit{the diameter of the $\Pi$} --  maximum length of $[\xi_{k-1}, \xi_k]$, where $k=1,2,...,n$. Let us recall that function $f$ is said to be \textit{Riemann integrable} if there exist the limit $I = \lim\limits_{d(\Pi) \rightarrow 0} \sum\limits_{k=1}^{n} f(t_k) \cdot |\xi_k - \xi_{k-1}|$, and we call this limit the Riemann integral of the function $f$, and write $I = \int\limits_{a}^{b} f(t)dt$. We know many different properties of this integral, for example linearity, integration on subsegment of $[a,b]$ etc. 
	
	If we look at the definition of Riemann integral more attentively, we realize that, in fact we can use one special filter and obtain desirable result. In next section we are going to develop this idea.
	
	\section{Integration with respect to filter}
	
	Just for simplicity we are going to consider functions, defined on $[0,1]$. Let $f:[0,1] \rightarrow \R$ be a function. As above, denote $\Pi = \{a \leq \xi_1 \leq \xi_2 \leq ...\leq \xi_n = b\}$ the partition of $[0,1]$, in other words, $\overset{n}{\underset{k=1}{\cup}} [\xi_{k-1}, \xi_k] = [0,1]$. Consider also the set $T = \{t_1, t_2, ...,t_n\}$ such that for each $k=1,2,...,n$ $t_k \in [\xi_{k-1}, \xi_k]$. For $k=1,2,...,n$ denote $\Delta_k := |\xi_k - \xi_{k-1}|$. Denote also $\TP[0,1]$ the set of all tagged partition of $[0,1]$. For a tagged partition $(\Pi, T) \in \TP[0,1]$ denote 
	
	$$
	S(f, \Pi, T) = \sum_{k=1}^{n} f(t_k) \Delta_k.
	$$
	Now we are going to introduce the central definition of this paper
	
	\begin{definition}\label{main_def}
		Let $f:[0,1] \rightarrow \R$ be a function, $\F$ be a filter on $\TP[0,1]$. We say that $f$ is \textit{integrable over filter $\F$} ($\F$-integrable for short), if there exists $I \in \R$ such that $I = \lim\limits_{\F} S(f, \Pi, T)$. The number $I$ is called \textit{the $\F$-integral of the $f$} (denote $I = \int\limits_{0}^{1} f d \F$).
	\end{definition}

	\begin{remark}
		The fact that $f$ is $\F$-integrable we will write as follows: 
		$$
		f \in \Int(\F).
		$$
	\end{remark}
	
	\begin{remark}
		Using Definition \ref{main_def} we can construct the Riemann integral as follows. Let $\delta > 0$ be a real positive number. Denote 
		$$
		P_{<\delta} = \{(\Pi, T) \in \TP[0,1]:\ d(\Pi) < \delta\},
		$$
		where $d(\Pi)$ stands for diameter of $\Pi$. Consider now
		$$
		\B_{<\delta} = \{P_{<\delta}: \delta > 0\}.
		$$
		It is easy to check that $\B_{<\delta}$ is a filter base. Denote $\F_{<\delta}$ filter generate by $\B_{<\delta}$. Let $f:[0,1] \rightarrow \R$ be a function. Then $f$ is integrable by Riemann if there exists the limit $\lim\limits_{\F_{<\delta}} S(f, \Pi, T)$.
	\end{remark}
	
	Bellow we study  different properties of filter integration. 
	\begin{definition}
		Let $X$ be a non-empty set, $f: X \rightarrow \R$ be a function, and $\F$ be a filter on $X$. We say that $f$ is \textit{bounded with respect to $\F$} ($\F$-bounded for short), if there is $C > 0$ such that there exists $A \in \F$ such that for every $t \in A$ $|f(t)| < C$.
	\end{definition}
	
	The following lemma is very simple, but for readers convenient we present its proof.
	
	\begin{lemma}\label{lemma_bounded}
		Let $X$ be a non-empty set, $f: X \rightarrow \R$ be a function, and $\F$ be a filter on $X$. Suppose that there exists $I \in \R$, $I = \lim\limits_{\F} f$. Then $f$ is $\F$-bounded.
	\end{lemma}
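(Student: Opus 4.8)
The plan is to unwind the definition of convergence over a filter at a single, conveniently chosen neighbourhood of $I$, and then convert the resulting containment into the uniform bound required by $\F$-boundedness.

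First I would fix the open neighbourhood $U = (I-1, I+1) \in \Oo(I)$. Since $I = \lim\limits_{\F} f$, the definition of filter convergence yields a set $A \in \F$ such that $f(t) \in U$ for every $t \in A$; that is, $|f(t) - I| < 1$ for all $t \in A$. Next I would apply the triangle inequality: for each $t \in A$,
$$
|f(t)| \le |f(t) - I| + |I| < 1 + |I|.
$$
Finally, setting $C := |I| + 2$ (choosing the constant strictly larger than $1+|I|$ so that the strict inequality demanded in the definition of $\F$-boundedness is satisfied), we have $C > 0$ and $|f(t)| < C$ for every $t \in A$, which is exactly the assertion that $f$ is $\F$-bounded.

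There is no real obstacle here: the only point requiring the slightest care is the passage from the nonstrict bound $|f(t)| < 1 + |I|$ (where the strictness already comes for free from $|f(t)-I|<1$) to the strict bound with a clean constant, which is why I pad $C$ by an extra unit rather than taking $C = 1 + |I|$ directly. The axioms of a filter are not even needed beyond the mere existence of the set $A \in \F$ supplied by the convergence hypothesis.
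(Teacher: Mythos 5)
Your proof is correct and follows essentially the same argument as the paper: apply the definition of filter convergence at a fixed neighbourhood of $I$ (the paper uses an arbitrary $\varepsilon>0$, you take $\varepsilon=1$), use the triangle inequality, and take $C$ slightly larger than $|I|$ plus the tolerance. The extra padding of the constant is harmless but not needed, since $|f(t)|-|I|\le |f(t)-I|<1$ already gives the strict bound $|f(t)|<|I|+1$.
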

	
	\begin{proof}
		We know that $I = \lim\limits_{\F} f$. It means that for every $\varepsilon > 0$ there exists $A \in \F$ such that for all $t \in A$ $|f(t) - I| < \varepsilon$. Consider
		$$
		|f(t)| - |I|\leq|f(t) - I| < \varepsilon.
		$$
		In other words, $|f(t)| \leq |I| + \varepsilon$. Then just put $C := |I| + \varepsilon$.
	\end{proof}

	The next theorem generalizes well-know fact about Riemann integral: if function in integrable by Riemann then it's bounded.
	
	\begin{theorem}
		Let $\F$ be a filter on $\TP[0,1]$, $f:[0,1] \rightarrow \R$ be a function, and $f \in \Int(\F)$. Then $S(f, \Pi, T)$ is $\F$-bounded.
	\end{theorem}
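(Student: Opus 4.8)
The plan is to recognize that this theorem is an immediate consequence of Lemma~\ref{lemma_bounded}, once we identify the right ambient set and the right real-valued function. Indeed, the set $\TP[0,1]$ of all tagged partitions of $[0,1]$ is a non-empty set, and the assignment $(\Pi, T) \mapsto S(f, \Pi, T)$ defines a function $g : \TP[0,1] \to \R$. By Definition~\ref{main_def}, the hypothesis $f \in \Int(\F)$ means precisely that there exists $I \in \R$ with $I = \lim\limits_{\F} S(f,\Pi,T)$, that is, $I = \lim\limits_{\F} g$.

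First I would set $X := \TP[0,1]$ and $g := S(f,\,\cdot\,,\,\cdot\,)$, and observe that all the hypotheses of Lemma~\ref{lemma_bounded} hold: $X$ is non-empty, $g : X \to \R$ is a function, $\F$ is a filter on $X$, and $g$ admits the limit $I$ over $\F$. Applying the lemma then yields that $g$ is $\F$-bounded, which is exactly the assertion that $S(f,\Pi,T)$ is $\F$-bounded, so the proof is complete.

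There is no genuine obstacle here; the only point requiring (routine) attention is that the objects match the abstract framework of Lemma~\ref{lemma_bounded}. If one prefers a self-contained argument, the short proof of that lemma can be reproduced inline: fixing any $\eps > 0$, convergence gives $A \in \F$ with $|g(t) - I| < \eps$ for all $t \in A$, whence $|g(t)| \le |I| + \eps$ for all $t \in A$, so $C := |I| + \eps$ witnesses the $\F$-boundedness of $S(f,\Pi,T)$.
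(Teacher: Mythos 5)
Your proposal is correct and matches the paper's proof, which likewise consists of a direct appeal to Lemma~\ref{lemma_bounded} applied to $X = \TP[0,1]$ and the function $(\Pi,T) \mapsto S(f,\Pi,T)$; your additional inline reproduction of the lemma's argument is fine but not needed.
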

	
	\begin{proof}
		Just use Lemma \ref{lemma_bounded}.
	\end{proof}
	
	Let us formulate well-known fact about Riemann integral, using filters. 
	\begin{theorem}\label{bound}
		Let $f: [0,1] \rightarrow \R$, there exists $\lim\limits_{\F_{<\delta}} S(f, \Pi, T)$. Then $f$ is bounded, in other words, there is $C > 0$ such that for all $t \in [0,1]$ $|f(t)| \leq C$.
	\end{theorem}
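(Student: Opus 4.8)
The plan is to deduce boundedness of $f$ from the $\F_{<\delta}$-boundedness of the Riemann sums, which we already know, and then to exploit the freedom in choosing the tags of a partition. First I would apply Lemma \ref{lemma_bounded} to the function $(\Pi,T)\mapsto S(f,\Pi,T)$ on $\TP[0,1]$ together with the filter $\F_{<\delta}$: since $\lim_{\F_{<\delta}} S(f,\Pi,T)$ exists, there are a constant $C>0$ and a set $A\in\F_{<\delta}$ with $|S(f,\Pi,T)|<C$ for all $(\Pi,T)\in A$. Because $\F_{<\delta}$ is generated by the base $\B_{<\delta}$, there is $\delta>0$ with $P_{<\delta}\subset A$, and hence
$$
|S(f,\Pi,T)|<C \quad\text{for every tagged partition }(\Pi,T)\text{ with } d(\Pi)<\delta.
$$

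Next I would argue by contradiction, assuming $f$ is not bounded on $[0,1]$. Fix one partition $\Pi=\{0=\xi_0\le\xi_1\le\dots\le\xi_n=1\}$ with $d(\Pi)<\delta$ (e.g. the uniform partition with $n$ large enough). Since $[0,1]=\bigcup_{k=1}^n[\xi_{k-1},\xi_k]$ and $f$ is unbounded, $f$ must be unbounded on at least one of these subintervals; moreover that subinterval cannot reduce to a single point, so it is some $[\xi_{j-1},\xi_j]$ with $\De_j=|\xi_j-\xi_{j-1}|>0$.

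Then I would build a bad tagged partition for this fixed $\Pi$. Choose arbitrary tags $t_k\in[\xi_{k-1},\xi_k]$ for all $k\ne j$ and put $M:=\sum_{k\ne j}|f(t_k)|\De_k$, a fixed finite number. Using that $f$ is unbounded on $[\xi_{j-1},\xi_j]$, pick $t_j\in[\xi_{j-1},\xi_j]$ with $|f(t_j)|>(C+M)/\De_j$. For the resulting tagged partition $(\Pi,T)$ we obtain
$$
|S(f,\Pi,T)|\ge |f(t_j)|\De_j-\sum_{k\ne j}|f(t_k)|\De_k>(C+M)-M=C,
$$
which contradicts the bound from the first step, since $d(\Pi)<\delta$. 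Hence $f$ must be bounded on $[0,1]$.

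The only point needing a bit of care — and the closest thing to an obstacle — is the claim that the subinterval on which $f$ is unbounded can be taken non-degenerate, so that division by $\De_j$ is legitimate: this holds because $f$ restricted to a one-point set is automatically bounded, and a function bounded on each of finitely many sets is bounded on their union. Everything else is a routine estimate.
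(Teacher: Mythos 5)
Your proof is correct. Note that the paper itself offers no proof of this theorem --- it is stated as a well-known classical fact about the Riemann integral, reformulated in filter language --- so your argument simply supplies the standard proof that the paper omits: extract from Lemma \ref{lemma_bounded} a bound $C$ valid on some $P_{<\delta}$, then, assuming $f$ unbounded, fix one partition of mesh less than $\delta$ and blow up the Riemann sum by choosing a suitable tag in a non-degenerate subinterval on which $f$ is unbounded. Your attention to the degenerate (one-point) subintervals, which the paper's notation $\xi_{k-1}\le\xi_k$ indeed permits, is exactly the right precaution, and the estimate $|S(f,\Pi,T)|\ge |f(t_j)|\De_j-M>C$ closes the contradiction cleanly.
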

	
	The next theorem is natural generalization of the Theorem \ref{bound}.
	
	\begin{theorem}
		Let $f: [0,1] \rightarrow \R$, let $\F$ be a filter on $TP[0,1]$ such that for every $A \in \F$ there exists $B \in \F_{<\delta}$ such that $B \subset A$ and let there exists $I \in \R$ such that $I = \lim\limits_{\F} S(f,\Pi, T)$. Then $C > 0$ such that for each $t \in [0,1]$ we have $|f(t)| < C$.
	\end{theorem}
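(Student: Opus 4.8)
The plan is to reduce everything to Theorem \ref{bound}, i.e.\ to the purely Riemannian case, by showing that the hypothesis on $\F$ forces the limit to exist along the (finer) Riemann filter $\F_{<\delta}$ as well. So the whole argument is: unpack the hypothesis into an inclusion of filters, transport the limit along that inclusion, and quote Theorem \ref{bound}.

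First I would unpack the hypothesis. The assumption ``for every $A \in \F$ there exists $B \in \F_{<\delta}$ with $B \subset A$'' says precisely that $\F_{<\delta}$ is finer than $\F$. Indeed, $\F_{<\delta}$ is a filter, hence closed under supersets (axiom (3) of a filter); so from $B \in \F_{<\delta}$ and $B \subset A$ we get $A \in \F_{<\delta}$. As $A \in \F$ was arbitrary, this gives $\F \subseteq \F_{<\delta}$.

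Next I would record the elementary monotonicity of filter limits: if $\F \subseteq \mathfrak{G}$ are filters on a set $X$ and $g : X \to \R$ satisfies $\lim_{\F} g = I$, then $\lim_{\mathfrak{G}} g = I$. This is immediate from Definition of convergence over a filter: given a neighbourhood $U$ of $I$, choose $A \in \F$ with $g(t) \in U$ for all $t \in A$; since $A \in \F \subseteq \mathfrak{G}$, the same set $A$ witnesses convergence along $\mathfrak{G}$. Applying this with $X = \TP[0,1]$, $g = S(f,\Pi,T)$, $\mathfrak{G} = \F_{<\delta}$, and using the hypothesis $I = \lim_{\F} S(f,\Pi,T)$ together with $\F \subseteq \F_{<\delta}$ from the previous paragraph, I obtain that $\lim_{\F_{<\delta}} S(f,\Pi,T)$ exists and equals $I$.

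Finally, Theorem \ref{bound} applies directly: the existence of $\lim_{\F_{<\delta}} S(f,\Pi,T)$ yields a constant $C_0 > 0$ with $|f(t)| \le C_0$ for all $t \in [0,1]$; putting $C := C_0 + 1$ gives the strict inequality $|f(t)| < C$ claimed. I do not expect a genuine obstacle here; the only point that requires care is the \emph{direction} of the refinement — one must transfer the limit from the coarser filter $\F$ to the finer filter $\F_{<\delta}$, and not conversely — and this is exactly what the hypothesis ``every $A\in\F$ contains some $B\in\F_{<\delta}$'' is designed to secure.
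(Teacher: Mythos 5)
Your proposal is correct and follows essentially the same route as the paper: both arguments use the hypothesis to transfer the limit $I=\lim_{\F} S(f,\Pi,T)$ to the filter $\F_{<\delta}$ (the paper passes to the subset $B\subset A$ with $B\in\F_{<\delta}$, while you equivalently note that upward closure gives $\F\subseteq\F_{<\delta}$ and reuse $A$ itself), and then both conclude by invoking Theorem \ref{bound}.
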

	
	\begin{proof}
		There exists $I \in \R$ such that $I = \lim\limits_{\F} S(f,\Pi, T)$ $\Leftrightarrow$ for all $\varepsilon > 0$ there exists $A \in \F$ such that for all $(\Pi, T) \in A$ $|S(f, \Pi, T) - I| < \varepsilon$. We know that for $A \in \F$ there is $B \in \F_{<\delta}$ such that $B \subset A$, then, particularly, for all $\varepsilon > 0$ there exists $A \in \F$ there is $B \in \F_{<\delta}$ such that $B \subset A$ such that for all $(\Pi, T) \in B$ $|S(f, \Pi, T) - I| < \varepsilon$ $\Rightarrow$ for all $\varepsilon > 0$ there exists $B \in \F_{<\delta}$ such that for all $(\Pi, T) \in B$ $|S(f, \Pi, T) - I| < \varepsilon$ $\overset{\text{Theorem \ref{bound}}}{\Rightarrow}$ $C > 0$ such that for each $t \in [0,1]$ we have $|f(t)| < C$, in other words, $f$ is bounded.
	\end{proof}

	Now we are going to demonstrate that filter integration has additive property. To demonstrate this we proof next easy two lemmas.
	
	\begin{lemma}\label{sum}
		Let $X$ be a non-empty set, $f,\ g: X \rightarrow \R$ be a functions, and $\F$ be a filter on $X$. Let $x = \lim\limits_{\F} f$, $y = \lim\limits_{\F} g$. Then $\lim\limits_{\F} (f+g) = x+y$.
	\end{lemma}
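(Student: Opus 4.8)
The plan is to run the standard $\eps$-argument for limits, with the only genuine ingredient being axiom (2) of the filter definition. First I would reduce to testing convergence on symmetric intervals: since every neighbourhood of a point $z \in \R$ contains an interval $(z-\eps, z+\eps)$ for some $\eps > 0$, the hypotheses $x = \lim_\F f$ and $y = \lim_\F g$ are equivalent to saying that for every $\eps > 0$ there is a member of $\F$ on which $f$ (resp. $g$) is within $\eps$ of $x$ (resp. $y$), and the conclusion is equivalent to the corresponding statement for $f+g$ and $x+y$.

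Next, fix $\eps > 0$. Applying the hypothesis on $f$ with $\eps/2$ gives $A \in \F$ with $|f(t) - x| < \eps/2$ for all $t \in A$; applying the hypothesis on $g$ with $\eps/2$ gives $B \in \F$ with $|g(t) - y| < \eps/2$ for all $t \in B$. Now I would invoke axiom (2) to get $A \cap B \in \F$ --- this is exactly the point where we use that $\F$ is a filter and not merely a filter base (on a base one would instead pass to some $C \in \B$ with $C \subset A \cap B$). For $t \in A \cap B$ the triangle inequality yields
$$
|(f+g)(t) - (x+y)| \le |f(t) - x| + |g(t) - y| < \frac{\eps}{2} + \frac{\eps}{2} = \eps .
$$

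Since $\eps > 0$ was arbitrary and $A \cap B \in \F$, this establishes $x + y = \lim_\F(f+g)$. There is no real obstacle here; the only thing requiring a moment's care is the translation between the neighbourhood formulation of filter convergence and the $\eps$-formulation used in the computation, and the explicit use of the intersection axiom of a filter.
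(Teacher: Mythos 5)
Your proof is correct and follows essentially the same route as the paper: pick a set in $\F$ for each of $f$ and $g$, intersect them using the filter intersection axiom, and conclude by additivity of closeness. The paper phrases the last step with neighbourhoods ($W \supset W_1 + W_2$) rather than your $\eps/2$ triangle-inequality computation, but since the functions are real-valued this is only a cosmetic difference.
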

	
	\begin{proof}
		We know that $x = \lim\limits_{\F} f$, so for each $U \in \Oo(x)$ there is $A \in \F$ such that $f(A) \subset U$. Analogically, $y = \lim\limits_{\F} f$, it means that for each $V \in \Oo(x)$ there is $B \in \F$ such that $f(B) \subset V$. We have to demonstrate that for each $W \in \Oo(x + y)$ there exists $C \in \F$ such that $(f+g)(C) \subset W$. Let fix $W \in \Oo(x+y)$. Then there exist $W_1 \in \Oo(x)$ and $W_2 \in \Oo(y)$ such that $W \supset W_1 + W_2$. Then there are $C_1,\ C_2 \in \F$ such that $f(C_1) \subset W_1$ and $f(C_2) \subset W_2$. Denote $C := C_1 \cap C_2$. Clearly that $C \in \F$. So
		$$
		(f+g)(C) = f(C) + g(C) \subset W_1 + W_2 \subset W.
		$$
	\end{proof}
	
	\begin{lemma}\label{mult}
		Let $X$ be a non-empty set, $f: X \rightarrow \R$ be a function, $\F$ be a filter on $X$, and $\alpha \in \R$. Let $x = \lim\limits_{\F} f$. Then $\lim\limits_{\F} \alpha f = \alpha x$.
	\end{lemma}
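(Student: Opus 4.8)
The plan is to mirror the proof of Lemma \ref{sum}, replacing the use of continuity of addition by continuity of scalar multiplication, and to treat the degenerate case $\alpha = 0$ separately, since it cannot be handled by the generic "rescale the neighbourhood" argument (which would require dividing by $\alpha$).

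First I would dispose of the case $\alpha = 0$. Here $\alpha f$ is the constant function equal to $0$ and $\alpha x = 0$, so for any $W \in \Oo(0)$ the whole set $A := X$ lies in $\F$ and satisfies $(\alpha f)(A) = \{0\} \subset W$; hence $\lim\limits_{\F} \alpha f = 0 = \alpha x$.

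Next, assume $\alpha \neq 0$ and fix $W \in \Oo(\alpha x)$. Since multiplication by the nonzero scalar $\alpha$ is a homeomorphism of $\R$ (equivalently, by the elementary estimate $|\alpha f(t) - \alpha x| = |\alpha|\,|f(t) - x|$), there is $V \in \Oo(x)$ with $\alpha V \subset W$; concretely one may take $V = (x - \eps/|\alpha|, x + \eps/|\alpha|)$ whenever $W \supset (\alpha x - \eps, \alpha x + \eps)$. Because $x = \lim\limits_{\F} f$, there is $A \in \F$ with $f(A) \subset V$. Then
$$
(\alpha f)(A) = \alpha \cdot f(A) \subset \alpha V \subset W,
$$
which is exactly what is needed to conclude $\lim\limits_{\F} \alpha f = \alpha x$.

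The only point requiring the slightest care is the case $\alpha = 0$; once it is peeled off, the remaining argument is a routine transcription of the neighbourhood-chasing already carried out in Lemma \ref{sum}, so I expect no genuine obstacle.
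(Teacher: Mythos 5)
Your proposal is correct and follows essentially the same route as the paper: treat $\alpha=0$ separately and, for $\alpha\neq 0$, use the fact that scaling by $\alpha$ maps neighbourhoods of $x$ onto neighbourhoods of $\alpha x$. If anything, your version is the more carefully quantified one --- you start from an arbitrary $W\in\Oo(\alpha x)$ and pull it back to $V=\alpha^{-1}W\in\Oo(x)$, whereas the paper's proof pushes forward a neighbourhood $U$ of $x$ and only observes that $\alpha U\in\Oo(\alpha x)$, leaving the pullback step implicit.
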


	\begin{proof}
	$x = \lim\limits_{\F} f$, it means that for each $U \in \Oo(x)$ there is $A \in \F$ such that $f(A) \subset U$. We have to demonstrate that for all $V \in \Oo(\alpha x)$ there is $B \in \F$ such that $(\alpha f)(B) \subset V$. Suppose that $\alpha \neq 0$. The case $\alpha = 0$ is obvious. Remark that if $W \in \Oo(x)$ then $\alpha W \in \Oo(\alpha x)$. So just put $B := A$. Then $(\alpha f)(B) = \alpha f(B) \subset \alpha U \in \Oo(\alpha x)$.
	\end{proof}

	\begin{theorem}\label{add_mult}
		Let $\F$ be a filter on $\TP[0,1]$, $f,g:[0,1] \rightarrow \R$ be a functions, $f \in \Int(\F)$, $\alpha,\ \beta \in \R$, and $f \in \Int(\F)$ and $g \in \Int(\F)$. Then $(\alpha f + \beta g) \in \Int(\F)$
	\end{theorem}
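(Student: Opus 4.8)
The plan is to reduce everything to the two lemmas just proved by exploiting the linearity of the Riemann-sum functional $S(\cdot,\Pi,T)$ in its function argument. First I would record the elementary pointwise identity
$$
S(\alpha f + \beta g, \Pi, T) = \alpha\, S(f, \Pi, T) + \beta\, S(g, \Pi, T),
$$
valid for every $(\Pi, T) \in \TP[0,1]$; this is immediate from the definition $S(h,\Pi,T) = \sum_{k=1}^{n} h(t_k)\Delta_k$ together with distributivity of multiplication over addition in $\R$. Thus, viewing $S(f,\Pi,T)$, $S(g,\Pi,T)$ and $S(\alpha f + \beta g,\Pi,T)$ as real-valued functions on the non-empty set $X := \TP[0,1]$, we have the functional identity $S(\alpha f + \beta g, \cdot) = \alpha\, S(f,\cdot) + \beta\, S(g,\cdot)$ on $X$.

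Next, since $f \in \Int(\F)$ and $g \in \Int(\F)$, there exist $I_f, I_g \in \R$ with $I_f = \lim\limits_{\F} S(f,\Pi,T)$ and $I_g = \lim\limits_{\F} S(g,\Pi,T)$. Applying Lemma \ref{mult} with this choice of $X$ to the scalars $\alpha$ and $\beta$ gives $\lim\limits_{\F} \alpha\, S(f,\Pi,T) = \alpha I_f$ and $\lim\limits_{\F} \beta\, S(g,\Pi,T) = \beta I_g$. Then Lemma \ref{sum}, applied to the two functions $\alpha\, S(f,\cdot)$ and $\beta\, S(g,\cdot)$, yields $\lim\limits_{\F}\bigl(\alpha\, S(f,\Pi,T) + \beta\, S(g,\Pi,T)\bigr) = \alpha I_f + \beta I_g$.

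Finally I would combine these observations: by the functional identity from the first step, $S(\alpha f + \beta g,\Pi,T)$ is the very same function on $\TP[0,1]$ as $\alpha\, S(f,\Pi,T) + \beta\, S(g,\Pi,T)$, so its limit over $\F$ exists and equals $\alpha I_f + \beta I_g \in \R$. Hence $(\alpha f + \beta g) \in \Int(\F)$, and as a by-product one obtains the formula $\int_0^1 (\alpha f + \beta g)\, d\F = \alpha \int_0^1 f\, d\F + \beta \int_0^1 g\, d\F$. I do not anticipate a genuine obstacle here; the only points deserving a moment's care are to confirm that Lemmas \ref{sum} and \ref{mult} were indeed stated for an arbitrary non-empty set $X$ (so that taking $X = \TP[0,1]$ is legitimate) and that the neighbourhood-based definition of $\lim\limits_{\F}$ in $\R$ agrees with the $\varepsilon$-formulation used implicitly above, which it does since intervals $(y-\eps, y+\eps)$ form a neighbourhood base at $y$.
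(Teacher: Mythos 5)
Your proposal is correct and follows exactly the route the paper intends: the paper's proof is simply ``use Lemmas \ref{sum} and \ref{mult}'', and you have filled in the implicit details (linearity of $S(\cdot,\Pi,T)$ in the function argument and the application of the two lemmas on $X=\TP[0,1]$) accurately.
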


	\begin{proof}
		Just use Lemmas \ref{sum} and \ref{mult}.
	\end{proof}
	
	\section{Integration with respect to different filters}
	
	In the previous section we've studied arithmetic properties of integral over filter and problems deals with boundedness. This section is devoted to integration over different filters and its relations.
	
	For $(\Pi, T) \in \TP[0,1]$ and $t \in T$ we denote $\Delta(t)$ length of the element of partition of $\Pi$ which covers $t$.
	
	Let $(\Pi_1, T_1), (\Pi_2, T_2)$ be partitions of $[0,1]$. Consider
	
	\begin{equation}\label{dist}
		\begin{split}
			\rho((\Pi_1, T_1), (\Pi_2, T_2)) = 	\\
			\sum_{t \in T_1 \cap T_2} |\Delta_1(t) - \Delta_2(t)| + \sum_{T_1\setminus T_2} \Delta_1(t) + \sum_{T_2\setminus T_1} \Delta_2(t).
		\end{split}
	\end{equation}
	
	For easy using of concept defined in Equation \ref{dist} consider $\Ff: [0,1] \rightarrow l_1[0,1]$, such that $\Ff(t) = e_t$, where
	$$
	e_t(\tau) = 
	\begin{cases}
		1, \text{if } \tau = t;\\
		0, \text{otherwise}.
	\end{cases}
	$$
	
	It is clearly then that
	$$
	\rho((\Pi_1, T_1), (\Pi_2, T_2)) = ||S(\Ff, \Pi_1, T_1) - S(\Ff, \Pi_2, T_2)||.
	$$
	
	Now we are going to demonstrate that the mapping $\rho$, defined above, is a metric, or distance between two tagged partitions.
	
	\begin{proposition}
		Consider $\rho: \TP[0,1] \times \TP[0,1] \rightarrow \R$, $\rho((\Pi_1, T_1), (\Pi_2, T_2)) = ||S(\Ff, \Pi_1, T_1) - S(\Ff, \Pi_2, T_2)||$. Then $\rho$ satisfies all metric axioms.
	\end{proposition}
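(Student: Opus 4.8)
The plan is to take seriously the identity recorded just above the proposition, namely
$\rho((\Pi_1,T_1),(\Pi_2,T_2)) = \|S(\Ff,\Pi_1,T_1) - S(\Ff,\Pi_2,T_2)\|$,
which exhibits $\rho$ as the pull-back of the metric induced by the norm of $l_1[0,1]$ along the map $\Phi\colon \TP[0,1]\to l_1[0,1]$, $\Phi(\Pi,T):=S(\Ff,\Pi,T)=\sum_{k=1}^{n}\Delta_k\, e_{t_k}$. For \emph{any} map into a normed space, the pull-back of $\|x-y\|$ is automatically non-negative, symmetric (from $\|x-y\|=\|y-x\|$), and satisfies the triangle inequality (from $\|x-z\|\le\|x-y\|+\|y-z\|$). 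So three of the four metric axioms require nothing beyond quoting the norm axioms of $l_1[0,1]$ together with the displayed identity. I would also briefly record why \eqref{dist} and the norm expression agree: the coordinate of $\Phi(\Pi_1,T_1)-\Phi(\Pi_2,T_2)$ at a point $\tau$ equals $\Delta_1(\tau)-\Delta_2(\tau)$ for $\tau\in T_1\cap T_2$, equals $\Delta_1(\tau)$ for $\tau\in T_1\setminus T_2$, equals $-\Delta_2(\tau)$ for $\tau\in T_2\setminus T_1$, and is $0$ otherwise; summing absolute values reproduces exactly the three sums in \eqref{dist}.

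The only axiom needing genuine work is the identity of indiscernibles, $\rho((\Pi_1,T_1),(\Pi_2,T_2))=0 \iff (\Pi_1,T_1)=(\Pi_2,T_2)$. The direction ``$\Leftarrow$'' is trivial. For ``$\Rightarrow$'' I would argue from \eqref{dist}: the right-hand side is a sum of finitely many non-negative reals, so its vanishing forces each term to vanish. From $\sum_{T_1\setminus T_2}\Delta_1(t)=0$ we get $T_1\subseteq T_2$, from $\sum_{T_2\setminus T_1}\Delta_2(t)=0$ we get $T_2\subseteq T_1$, hence $T_1=T_2=:\{t_1<t_2<\dots<t_n\}$; and from $\sum_{t\in T_1}|\Delta_1(t)-\Delta_2(t)|=0$ we get $\Delta_1(t_k)=\Delta_2(t_k)$ for every $k$. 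It then remains to reconstruct the partitions: writing $0=\xi_0<\xi_1<\dots<\xi_n=1$ for the nodes of $\Pi_1$ and $0=\eta_0<\dots<\eta_n=1$ for those of $\Pi_2$, one has $\xi_k=\sum_{j\le k}\Delta_1(t_j)=\sum_{j\le k}\Delta_2(t_j)=\eta_k$, so $\Pi_1=\Pi_2$, and with $T_1=T_2$ this yields $(\Pi_1,T_1)=(\Pi_2,T_2)$.

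The step I expect to be the main obstacle — and which should be handled with a word of care — is the bookkeeping in that last reconstruction: it presupposes that a tag $t$ picks out a \emph{single} well-defined covering length $\Delta(t)$, which fails precisely when a tag coincides with an interior node $\xi_k$ and so lies in two adjacent subintervals (and, relatedly, that the number $n$ of tags agrees on both sides). The clean fix is to adopt the convention already implicit in \eqref{dist} — each tag counted once, $\Delta(t)$ a single number — equivalently to restrict to tagged partitions with distinct tags lying off the interior nodes, or to pass to the quotient identifying tagged partitions with the same image $\Phi(\Pi,T)\in l_1[0,1]$; under any of these readings the argument above goes through unchanged, and, as noted, the other three axioms are immediate from the normed-space structure of $l_1[0,1]$.
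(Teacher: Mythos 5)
Your proposal is correct and follows essentially the same route as the paper: the triangle inequality (and symmetry, non-negativity) via the $l_1[0,1]$ norm identity, and the identity of indiscernibles by forcing each non-negative term of \eqref{dist} to vanish. You are in fact somewhat more careful than the paper, which passes from the three vanishing conditions directly to $(\Pi_1,T_1)=(\Pi_2,T_2)$ without the node-reconstruction step or the caveat about tags coinciding with interior nodes that you rightly flag.
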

	
	\begin{proof}
			\begin{enumerate}
			\item let $(\Pi_1, T_1) = (\Pi_2, T_2)$. 
			
			It is clear that in this case $\rho((\Pi_1, T_1), (\Pi_2, T_2)) = 0$;
			\item let $\rho((\Pi_1, T_1), (\Pi_2, T_2)) = 0$. 
			
			Then $\rho((\Pi_1, T_1), (\Pi_2, T_2)) = \sum\limits_{t \in T_1 \cap T_2} |\Delta_1(t) - \Delta_2(t)| + \sum\limits_{T_1\setminus T_2} \Delta_1(t) + \sum\limits_{T_2\setminus T_1} \Delta_2(t) = 0$. We have a sum of non-negative numbers equals to 0. This means that 
			\begin{itemize}
				\item $\forall t \in T_1 \cap T_2$ $|\Delta_1(t) - \Delta_2(t)| = 0$ $\Rightarrow$ $\forall t \in T_1 \cap T_2$ $\Delta_1(t) = \Delta_2(t)$;
				\item $\forall t \in T_1 \setminus T_2$ $ \Delta_1(t)= 0$;
				\item $\forall t \in T_2 \setminus T_1$ $ \Delta_2(t)= 0$;
			\end{itemize}
			$\Rightarrow$ $(\Pi_1, T_1) = (\Pi_2, T_2)$.
			\item consider $(\Pi_1, T_1), (\Pi_2, T_2), (\Pi_3, T_3)$. Then 
			\begin{equation*}
				\begin{split}
					\rho((\Pi_1, T_1), (\Pi_2, T_2)) =\\
					||S(\Ff, \Pi_1, T_1) - S(\Ff, \Pi_2, T_2) + S(\Ff, \Pi_3, T_3) - S(\Ff, \Pi_3, T_3)|| \leq\\
					||S(\Ff, \Pi_1, T_1) - S(\Ff, \Pi_3, T_3)|| + ||S(\Ff, \Pi_3, T_3) - S(\Ff, \Pi_2, T_2)|| =\\ \rho((\Pi_1, T_1), (\Pi_3, T_3)) + \rho((\Pi_3, T_3), (\Pi_2, T_2))
				\end{split}
			\end{equation*}
		\end{enumerate}
	\end{proof}
	
	Now we introduce very important concept. 
	
	\begin{definition}
		Let $\F_1, \F_2$ be filters on $TP[0,1]$. We say that $\F_2$ \textit{$\rho$-dominates} filter $\F_1$ ($\F_2 \succ_{\rho} \F_1$), if for every $\varepsilon < 0$ and for each $A_1 \in \F_1$ there exists $A_2 \in \F_2$ such that for all $(\Pi_2, T_2) \in A_2$ there is $(\Pi_1, T_1) \in A_1$ such that $\rho((\Pi_1, T_1), (\Pi_2, T_2)) < \varepsilon$.
	\end{definition}
	
	\begin{proposition}
		Let $\F_2 \supset \F_1$. Then $\F_2$ $\rho$-dominates $\F_1$.
	\end{proposition}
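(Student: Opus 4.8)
The plan is to exploit the fact that $\rho$ vanishes on the diagonal, which is precisely the first metric axiom verified in the preceding proposition. So given $\varepsilon>0$ and an arbitrary $A_1\in\F_1$, the inclusion $\F_1\subset\F_2$ immediately gives $A_1\in\F_2$, and I would simply take $A_2:=A_1$.

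Then for every $(\Pi_2,T_2)\in A_2=A_1$ I would choose the witness $(\Pi_1,T_1):=(\Pi_2,T_2)$, which of course lies in $A_1$, and note that $\rho((\Pi_1,T_1),(\Pi_2,T_2))=0<\varepsilon$. This checks the definition of $\F_2\succ_\rho\F_1$ word for word.

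The only point worth flagging is the quantifier on $\varepsilon$ in the definition of $\rho$-domination (it should be read as $\varepsilon>0$); under that reading the argument above is complete, and it actually proves slightly more, since the approximating partition can be taken to be the given one, so no genuine $\varepsilon$-approximation is ever invoked. There is essentially no obstacle here: the statement is a direct bookkeeping consequence of the definitions together with $\rho$ being a metric, and the ``proof'' is one line.
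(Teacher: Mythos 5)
Your argument is correct and coincides with the paper's own proof: take $A_2:=A_1$ (using $\F_1\subset\F_2$) and witness $(\Pi_1,T_1):=(\Pi_2,T_2)$, so that $\rho=0<\varepsilon$. Your remark about the quantifier is also apt — the ``$\varepsilon<0$'' in the definition is a typo for $\varepsilon>0$, and the paper's proof reads it the same way you do.
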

	
	\begin{proof}
		As $\F_2 \supset \F_1$ we obtain that if $A \in \F_1$ then $A \in \F_2$. Consider an arbitrary $\varepsilon > 0$. Then for every $A_1 \in \F_1$ there is $A_2 \in \F_2$, $A_2 := A_1$ such that for each $(\Pi_2, T_2) \in A_2$ there exists $(\Pi_1, T_1) \in A_1$, $(\Pi_1, T_1) := (\Pi_2, T_2)$ such that $\rho\left((\Pi_1, T_1), (\Pi_2, T_2)\right) = \rho\left((\Pi_2, T_2), (\Pi_2, T_2)\right) = 0 < \varepsilon$.
	\end{proof}
	
	Previous proposition shows us that $\rho$-dominance generates some relation of order on $\TP[0,1]$ and is more general concept that relation of inclusion.
	
	It is clear that if $\F_1 \subset \F_2$ and $f \in \Int(\F_1)$ then $f \in \Int(\F_2)$ -- just use the definition of function limit over filter. So we can formulate next easy proposition.
	
	\begin{proposition}
		Let $f: [0,1] \rightarrow \R$ be a function, $\F_1,\ \F_2$ be filters on $\TP[0,1]$ such that $\F_1\ \subset \F_2$ and $f \in \Int(\F_1)$. Then $f \in \Int(\F_2)$.
	\end{proposition}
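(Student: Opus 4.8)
The plan is to unwind Definition~\ref{main_def} into the language of convergence over a filter and then exploit the inclusion $\F_1 \subset \F_2$ directly, with no estimates required. First I would let $I = \int_0^1 f\, d\F_1$, which exists by hypothesis, so that $I = \lim_{\F_1} S(f, \Pi, T)$. By the definition of a limit over a filter, this means that for every $U \in \Oo(I)$ there is $A \in \F_1$ such that $S(f, \Pi, T) \in U$ for all $(\Pi, T) \in A$.

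Next I would observe that the inclusion $\F_1 \subset \F_2$ says precisely that every member of $\F_1$ is also a member of $\F_2$; in particular the very same $A$ above satisfies $A \in \F_2$. Hence for every $U \in \Oo(I)$ there is $A \in \F_2$ with $S(f, \Pi, T) \in U$ for all $(\Pi, T) \in A$, which is exactly the assertion that $I = \lim_{\F_2} S(f, \Pi, T)$. Therefore $f \in \Int(\F_2)$, and in fact $\int_0^1 f\, d\F_2 = \int_0^1 f\, d\F_1$.

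There is no real obstacle here: the statement is the formal counterpart of the remark already made in the text just before the proposition, and the only point deserving a word of care is that one should check the value $I$ is transported unchanged (it is, since the same $A$ works verbatim), even though a filter could a priori admit more than one limit. One could, if desired, also note that this is a special case of a statement phrased via $\rho$-dominance, but the inclusion argument above is the most economical route and suffices.
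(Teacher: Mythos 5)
Your proof is correct and follows exactly the route the paper intends: the paper gives no written proof beyond the remark that one should ``just use the definition of function limit over filter,'' and your argument is precisely that unwinding, with the same set $A \in \F_1 \subset \F_2$ witnessing the limit and the value $I$ carried over unchanged.
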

	
		\begin{theorem}
		Let $\F_1, \F_2$ be filters on $[0,1]$. Let $f: [0,1] \rightarrow \R$ be a bounded function. Let $I = \lim\limits_{\F_1} S(f, \Pi, T)$ and $\F_2 \succ_{\rho} \F_1$. Then $I = \lim\limits_{\F_2} S(f, \Pi, T)$.
	\end{theorem}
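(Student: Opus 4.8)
The plan is to use that, for a bounded $f$, the Riemann--sum functional $(\Pi,T)\mapsto S(f,\Pi,T)$ is Lipschitz with respect to the metric $\rho$ built in this section, and then let $\rho$-dominance carry the limit from $\F_1$ to $\F_2$. (Here "$\F_1,\F_2$ are filters on $[0,1]$'' should of course read "on $\TP[0,1]$''.) So the proof splits into one substantive estimate followed by a routine $\varepsilon/2+\varepsilon/2$ argument.

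First I would record the key estimate. Fix $M>0$ with $|f(t)|\le M$ for all $t\in[0,1]$; if $f$ is bounded by some $M_0\ge 0$, take $M=M_0+1$ so that $M$ is strictly positive. Let $(\Pi_1,T_1),(\Pi_2,T_2)\in\TP[0,1]$ and write $S(f,\Pi_i,T_i)=\sum_{t\in T_i}f(t)\,\Delta_i(t)$. Grouping the terms of $S(f,\Pi_1,T_1)-S(f,\Pi_2,T_2)$ into the three families $t\in T_1\cap T_2$, $t\in T_1\setminus T_2$, $t\in T_2\setminus T_1$ exactly as in \eqref{dist}, and bounding every $|f(t)|$ by $M$, one gets
$$
|S(f,\Pi_1,T_1)-S(f,\Pi_2,T_2)|\le M\,\rho\big((\Pi_1,T_1),(\Pi_2,T_2)\big).
$$
This is really the only place where boundedness of $f$ enters, and it is the heart of the statement.

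Now the $\varepsilon$-argument. Fix $\varepsilon>0$. Since $I=\lim_{\F_1}S(f,\Pi,T)$, there is $A_1\in\F_1$ with $|S(f,\Pi_1,T_1)-I|<\varepsilon/2$ for every $(\Pi_1,T_1)\in A_1$. Apply $\F_2\succ_{\rho}\F_1$ with parameter $\varepsilon/(2M)$ and with this $A_1$: there is $A_2\in\F_2$ such that every $(\Pi_2,T_2)\in A_2$ admits some $(\Pi_1,T_1)\in A_1$ with $\rho\big((\Pi_1,T_1),(\Pi_2,T_2)\big)<\varepsilon/(2M)$. Then for every $(\Pi_2,T_2)\in A_2$, combining the triangle inequality with the Lipschitz bound gives
$$
|S(f,\Pi_2,T_2)-I|\le M\cdot\frac{\varepsilon}{2M}+\frac{\varepsilon}{2}=\varepsilon .
$$
As $\varepsilon>0$ was arbitrary, $I=\lim_{\F_2}S(f,\Pi,T)$, i.e. $f\in\Int(\F_2)$ with the same value of the integral. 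The main (indeed only) obstacle is the Lipschitz estimate above; once it is in place, everything else is the standard splitting, so I expect no genuine difficulty, though one should keep $M>0$ to avoid dividing by zero in the degenerate case $f\equiv 0$.
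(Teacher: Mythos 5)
Your proposal is correct and follows essentially the same route as the paper: the Lipschitz estimate $|S(f,\Pi_1,T_1)-S(f,\Pi_2,T_2)|\le M\,\rho\big((\Pi_1,T_1),(\Pi_2,T_2)\big)$ with $M=\sup|f|$ is exactly the bound the paper obtains by splitting the sum over $T_1\cap T_2$, $T_1\setminus T_2$, $T_2\setminus T_1$, followed by the same triangle-inequality argument via $\rho$-dominance. The only differences are cosmetic: you calibrate with $\varepsilon/2$ and $\varepsilon/(2M)$ to land exactly on $\varepsilon$ (and guard against $M=0$), whereas the paper ends with the equally acceptable bound $\varepsilon(1+C)$.
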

	
	\begin{proof}
		Denote $C := \sup\limits_{t \in [0,1]} |f(t)|$. 
		
		We have to proof that for every $\varepsilon > 0$ there exists $B \in \F_2$ such that for each $(\Pi_B, T_B) \in B$ we have $|S(f, \Pi_B, T_B) - I| < \varepsilon$. 
		
		We know that for every $\varepsilon > 0$ there exists $A \in \F_1$ such that for each $(\Pi_1, T_1) \in A$ we have $|S(f, \Pi_1, T_1) - I| < \varepsilon$. 
		
		Now for an arbitrary $\varepsilon > 0$ and $A \in \F_1$ found above one can find $A_2 \in \F_2$ such that for all $(\Pi_2, T_2) \in A_2$ there is $(\Pi_1, T_1) \in A_1$ such that $\rho((\Pi_1, T_1), (\Pi_2, T_2)) < \varepsilon$. Then put $B := A_2$. Then for all $(\Pi_B, T_B) \in B$ we have $(\Pi_1, T_1) \in A_1$ such that
		\begin{align*}
			|S(f, \Pi_B, T_B) - I| =\\
			|S(f, \Pi_B, T_B) - S(f, \Pi_1, T_1) + S(f, \Pi_1, T_1) -I| \leq\\
			|S(f, \Pi_B, T_B) - S(f, \Pi_1, T_1)| + |S(f, \Pi_1, T_1) -I| = \\
			\sum_{t \in T_B\cap T_1} |f(t)|\cdot|\Delta_B(t) - \Delta_1(t)| + \sum_{t \in T_B \setminus T_1} |f(t)|\cdot\Delta_B(t) +\\ \sum_{t \in T_1 \setminus T_B} |f(t)|\cdot\Delta_1(t) + \varepsilon \leq C\cdot \rho((\Pi_B, T_B), (\Pi_1, T)) + \varepsilon \leq\\ C\varepsilon + \varepsilon \leq \varepsilon(1 + C).
		\end{align*} 
	\end{proof}

	\section{Exactly tagged filters}
	
	In this part of our paper we consider problems deals filter integration of unbounded functions.
	
	\begin{definition}
		Let $\B$ be a filter base on $TP[0,1]$. We say that $\B$ is \textit{exactly tagged} if there exist $A\subset [0,1]$ -- a strictly decreasing sequence of numbers such that for each $B \in \B$ and for every $(\Pi, T) \in B$ we have that $T \cap A = \emptyset$.
	\end{definition}
	
	\begin{definition}
		We say that filter $\F$ on $TP[0,1]$ is \textit{exactly tagged} if there exists exactly tagged base $\B$ of $\F$.
	\end{definition}
	
	\begin{theorem}
		If filter $\F$ on $TP[0,1]$ is exactly tagged then there exists unbounded function $f: [0,1] \rightarrow \R$ such that $f \in Int(\F)$.
	\end{theorem}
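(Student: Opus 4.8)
The plan is to concentrate all of the unboundedness of the desired function onto the strictly decreasing sequence that an exactly tagged filter, by definition, avoids. Let $\B$ be an exactly tagged base of $\F$ and let $A = \{a_1 > a_2 > a_3 > \cdots\} \subset [0,1]$ be the associated strictly decreasing sequence, so that $T \cap A = \emptyset$ for every tagged partition $(\Pi, T)$ belonging to any member of $\B$.

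First I would define $f : [0,1] \to \R$ by setting $f(a_n) = n$ for each $n \in \N$ and $f(t) = 0$ for every $t \in [0,1] \setminus A$. Since $A$ is an infinite sequence, $\sup_{t \in [0,1]} |f(t)| = +\infty$, so $f$ is unbounded; this is the only place where it matters that $A$ is a genuine (infinite) sequence rather than a finite set. Next I would evaluate the Riemann sums along $\B$: fixing $B \in \B$ and $(\Pi, T) \in B$, the condition $T \cap A = \emptyset$ forces $f(t_k) = 0$ for every tag $t_k$ of $T$, whence
\[
S(f, \Pi, T) = \sum_{k=1}^{n} f(t_k)\,\Delta_k = 0 .
\]
Thus the map $(\Pi, T) \mapsto S(f, \Pi, T)$ is identically zero on each member of $\B$.

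Finally I would conclude the convergence. Since $\B$ is a base of $\F$ we have $\B \subset \F$, so for every neighbourhood $U$ of $0 \in \R$ one may pick any $B \in \B \subset \F$ and observe that $S(f, \Pi, T) = 0 \in U$ for all $(\Pi, T) \in B$; hence $\lim_{\F} S(f, \Pi, T) = 0$, that is, $f \in \Int(\F)$ (with $\int_0^1 f\,d\F = 0$), while $f$ is unbounded, as required. There is no serious obstacle here: the construction is essentially forced once one reads the definition of ``exactly tagged'', and the only points needing (minor) care are that the avoided set $A$ is infinite — so that $f$ may be taken unbounded — and that a filter base is contained in the filter it generates, which is what legitimizes taking $B \in \B$ as the required member of $\F$ in the last step.
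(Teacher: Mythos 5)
Your proof is correct and follows essentially the same approach as the paper: put the value $n$ at the $n$-th point of the avoided sequence, $0$ elsewhere, so every Riemann sum over a member of the exactly tagged base vanishes and the $\F$-integral is $0$. In fact your version is slightly more faithful to the statement, since you work with the arbitrary avoided sequence $A$ of the given filter, whereas the paper's written proof illustrates the construction with the particular sequence $\{1/n\}_{n\in\N}$ and its associated base.
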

	
		\begin{proof}
		Denote $\displaystyle \N^{-1} =  \left\{ \frac{1}{n} \right\}_{n \in \N}$ and consider next filter base $\B = (B_n)_{n \in \N}$ on $TP[0,1]$:\\
		$\displaystyle B_1 = \left\{(\Pi, T): T \cap \N^{-1} = \emptyset \text{ and } d(\Pi) < 1\right\}$;\\
		$\displaystyle B_2 = \left\{(\Pi, T): T \cap \N^{-1} = \emptyset \text{ and } d(\Pi) < \frac{1}{2}\right\}$;\\
		$\displaystyle B_3 = \left\{(\Pi, T): T \cap \N^{-1} = \emptyset \text{ and } d(\Pi) < \frac{1}{3}\right\}$;\\
		...\\
		$\displaystyle B_m = \left\{(\Pi, T): T \cap \N^{-1} = \emptyset \text{ and } d(\Pi) < \frac{1}{m}\right\}$. 
		
		Consider now
		
		$$
		f(t) = 
		\begin{cases}
			\displaystyle
			n, \text{ if } t = \frac{1}{n},\ n \in \N \\
			0, \text{ otherwise}
		\end{cases}.
		$$
		
		Then for each $n \in \N$ and for every $(\Pi, T) \in B_n$ we have that $S(f, \Pi, T) = 0$, so $\lim\limits_{\B}S(f, \Pi, T) = 0$.
	\end{proof}
	
	For a tagged partition $(\Pi, T)$ of $[0, 1]$ and $\tau \in [0, 1]$ denote $\ell(\Pi, T, \tau)$ the number which is equal to the length of the segment $\Delta \in \Pi$, for which $\tau \in \Delta$, if $\tau \in T$. If $\tau \notin T$, we put $\ell(\Pi, T, \tau) = 0$. In this notation
	$$
	S(f, \Pi, T) = \sum_{t \in [0,1]} f(t) \ell(\Pi, T, t).
	$$
	
	\begin{theorem} \label{th}
		For a filter $\F$ on $TP[0,1]$ \TFAE
		\begin{enumerate}
			\item There exists an unbounded function $f:[0,1] \rightarrow [0, +\infty)$ such that $S(f, \Pi, T)$ is $\F$-bounded;
			\item There exists a countable subset $\{t_n\}_{n \in \N} \subset [0, 1]$ such that there is $A \in \F$ such that for every  $(\Pi, T) \in A$
			$$
			\sum_{n \in \N} n \cdot \ell(\Pi, T, t_n) < 1.
			$$
		\end{enumerate}
	\end{theorem}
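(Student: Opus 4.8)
The plan is to prove the two implications separately; both are elementary once the right points are selected. Throughout we read (2) with the points $t_n$ ($n\in\N$) pairwise distinct, which is the substantive case (if only the underlying set mattered, an infinite such set can be thinned to a distinct subsequence without increasing the sum, and a finite underlying set would force $\ell(\Pi,T,\cdot)$ to vanish identically on $A$ at the repeated point, a situation that does not produce an unbounded $f$).

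\emph{Proof of $(2)\Rightarrow(1)$.} Given $\{t_n\}_{n\in\N}$ and $A\in\F$ as in (2), define $f:[0,1]\to[0,+\infty)$ by $f(t_n)=n$ and $f(t)=0$ otherwise; since the $t_n$ are distinct this is well defined and unbounded. Because every tag set $T$ is finite, for each $(\Pi,T)$ the sum $S(f,\Pi,T)=\sum_{t\in[0,1]}f(t)\ell(\Pi,T,t)$ has only finitely many nonzero terms, and it equals $\sum_{n\in\N}n\,\ell(\Pi,T,t_n)$. Hence $S(f,\Pi,T)<1$ for all $(\Pi,T)\in A$, so $S(f,\Pi,T)$ is $\F$-bounded with constant $C=1$ witnessed by $A$.

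\emph{Proof of $(1)\Rightarrow(2)$.} Let $f:[0,1]\to[0,+\infty)$ be unbounded with $S(f,\Pi,T)$ $\F$-bounded: fix $C>0$ and $A_0\in\F$ with $0\le S(f,\Pi,T)<C$ for all $(\Pi,T)\in A_0$. The key observation is that for every $M>0$ the level set $\{t\in[0,1]:f(t)\ge M\}$ is infinite. Indeed, if $\{t:f(t)\ge M_0\}$ were finite for some $M_0$, then the sets $\{t:f(t)\ge k\}$ for $k\ge M_0$ would form a nested sequence of nonempty finite sets whose cardinalities are nonincreasing, hence eventually constant; the sets would then be eventually equal to some nonempty $G$, and any $t\in G$ would satisfy $f(t)\ge k$ for all $k$, contradicting $f(t)\in\R$. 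Using this, choose inductively pairwise distinct points $t_n$ with $f(t_n)\ge nC$: having chosen $t_1,\dots,t_{n-1}$, the infinite set $\{t:f(t)\ge nC\}$ contains a point different from all of them. Now put $A:=A_0$. For any $(\Pi,T)\in A_0$, since $f(t_n)\ge nC>0$ we have $n\,\ell(\Pi,T,t_n)\le \tfrac1C f(t_n)\ell(\Pi,T,t_n)$ for every $n$; summing, and using that the nonnegative terms $f(t)\ell(\Pi,T,t)$ over $\{t_n\}$ are dominated by the full sum $S(f,\Pi,T)$,
\[
\sum_{n\in\N}n\,\ell(\Pi,T,t_n)\le\frac1C\sum_{n\in\N}f(t_n)\ell(\Pi,T,t_n)\le\frac1C\,S(f,\Pi,T)<\frac1C\cdot C=1,
\]
which is exactly (2).

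\emph{Main obstacle.} Neither implication needs delicate estimates; the one point requiring care is the level-set observation in $(1)\Rightarrow(2)$, because it is precisely what allows the tags $t_n$ to be chosen pairwise distinct — a genuinely unbounded real-valued function cannot concentrate all its large values on a finite set. The specific constant $1$ in (2) is inessential: it is produced from an arbitrary $\F$-bound $C$ via the choice $f(t_n)\ge nC$ above, and replacing $1$ by any positive constant in (2) would not affect the equivalence.
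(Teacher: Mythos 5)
Your proposal is correct and follows essentially the same route as the paper: for $(1)\Rightarrow(2)$ pick points with $f(t_n)\ge nC$ and divide the $\F$-bound $C$ through the sum, and for $(2)\Rightarrow(1)$ take $f(t_n)=n$, $f=0$ elsewhere, so that $S(f,\Pi,T)$ is controlled by the hypothesis on $A$. Your only addition is the explicit level-set argument guaranteeing the $t_n$ can be chosen pairwise distinct, a small point the paper's proof passes over silently.
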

	
		\begin{proof}
		\textbf{{(1)$\Rightarrow$(2)}}: Let $f$ be a non-negative, unbounded function on $[0,1]$ such that there is $C > 0$ and $B \in \F$ such that for each $(\Pi, T) \in B$ we have $\sum\limits_{t \in [0,1]} f(t) \cdot \ell(\Pi, T, t) < C$. As $f$ is unbounded, there exists $(\alpha_n) \subset [0,1]$ such that for every $n \in \N$ $f(\alpha_n) \geq C n$. Then there exists $(\alpha_n) \subset [0,1]$, $C > 0$, there is $A \in \F$, $A := B$ such that for all $(\Pi, T) \in A$ we obtain:
		\begin{align*}
			\sum_{t \in [0,1]} n \cdot \ell(\Pi, T, \alpha_n) \leq \sum_{n \in \N} \frac{f(\alpha_n)}{C} \cdot \ell(\Pi, T, \alpha_n) \leq\\
			\frac{1}{C}\sum_{t \in [0,1]} f(t) \cdot \ell(\Pi, T, t) < \frac{1}{C} \cdot C = 1.
		\end{align*}
		
		\textbf{{(2)$\Rightarrow$(1)}}: Let there exists a countable subset $\{t_n\}_{n \in \N} \subset [0, 1]$ and $C > 0$ such that there is $A \in \F$ such that for every  $(\Pi, T) \in A$ $\sum\limits_{n \in \N} n \cdot \ell(\Pi, T, t_n) < C$. Consider function
		$$
		f(t) = 
		\begin{cases}
			n, \text{ if } t = \alpha_n,\ n \in \N\\
			0, \text{ if } t \neq \alpha_n
		\end{cases}.
		$$
		
		Obviously, $f(t)$ is unbounded. Then there is $C > 0$ and there is $B \in \F$, $B := A$ such that for every $(\Pi, T) \in A$
		
		\begin{align*}
			\sum_{t \in [0,1]} f(t) \cdot \ell(\Pi, T, t) \leq \sum_{n \in \N} f(\alpha_n) \cdot \ell(\Pi, T, \alpha_n) \leq\\
			\sum_{n \in \N} n \cdot \ell(\Pi, T, \alpha_n) < C
		\end{align*}.
	\end{proof}
	
	\section{Integration over filter on a subsegment}
	
	Our next goal is as follows: if function $f$ is integrable on $[0,1]$ over filter $\F$ on $\TP[0,1]$ then for an arbitrary $[\alpha, \beta] \subset [0,1]$ function $f$ is is integrable on $[\alpha, \beta]$ over filter $\F$. 
	
	To achieve this purpose we need to construct some restriction of filter $\F$ on subsegment $[\alpha, \beta] \subset [0,1]$. Now we present how we can construct such restriction.
	
	Consider an arbitrary $[\alpha, \beta] \subset [0,1]$. We consider only $T$ such that $T \cap (\alpha, \beta) \neq \emptyset$. Consider an arbitrary $(\Pi, T) \in TP[0,1]$.
	
	We have four cases:
	
	\begin{enumerate}
		\item $\min \{T \cap (\alpha, \beta)\} > \min\{\Pi \cap (\alpha, \beta)\}$\\
		$\max \{T \cap (\alpha ,\beta)\} < \max \{\Pi \cap (\alpha ,\beta)\}$;
		
		\item $\min \{T \cap (\alpha, \beta)\} > \max \{\Pi \cap (0, \alpha)\}$\\
		$\max \{T \cap (\alpha ,\beta)\}  < \max \{\Pi \cap (\alpha ,\beta)\}$;
		
		\item  $\min \{T \cap (\alpha, \beta)\}  > \min\{\Pi \cap (\alpha, \beta)\}$\\
		$\max \{T \cap (\alpha ,\beta)\} < \min\{\Pi \cap (\beta, 1)\}$;
		
		\item $\min \{T \cap (\alpha, \beta)\} > \max \{\Pi \cap (0, \alpha)$\\
		$\max \{T \cap (\alpha ,\beta)\} < \min\{\Pi \cap (\beta, 1)\}$. 
		
	\end{enumerate}
	
	We have to construct a restriction of $(\Pi, T)$ on $[\alpha, \beta]$. In each of four described cases we have such $(\Pi_k, T_k) \in TP[\alpha, \beta]$, $k =1, 2, 3, 4$:
	
	\begin{enumerate}
		\item $\Pi_1 = \bigg(\Pi \setminus \big((\Pi \cap [0, \alpha)) \cup (\Pi \cap (\beta, 1)) \cup \min\{\Pi \cap (\alpha, \beta)\} \cup \max \{\Pi \cap (\alpha ,\beta)\}\big)\bigg) \cup \{\alpha, \beta\}$\\
		$T_1 = T \setminus \big((T \cap [0, \alpha)) \cup (T \cap (\beta, 1])\big)$;
		
		\item $\Pi_2 = \bigg(\Pi \setminus \big((\Pi \cap [0, \alpha)) \cup \max\{\Pi \cap (\alpha, \beta)\} \cup (\Pi \cap (\beta, 1))\big)\bigg)\cup \{\alpha, \beta\}$\\
		$T_2 = T_1$;
		
		\item $\Pi_3 = \bigg(\Pi \setminus \big((\Pi \cap [0,\alpha)) \cup \min\{\Pi \cap (\alpha)\} \cup (\Pi \cap [\beta, 1)) \big)\bigg)\cup \{\alpha, \beta\}$\\
		$T_3 = T_1$;
		
		\item $\Pi_4 = \bigg(\Pi \setminus \big((\Pi \cap [0, \alpha)) \cup (\Pi \cap [\beta, 1))\big)\bigg)\cup \{\alpha, \beta\}$\\
		$T_4 = T_1$.
	\end{enumerate}
	
	Now if we have an arbitrary filter $\F$ on $TP[0,1]$ we can construct filter $\F_{[\alpha, \beta]}$ on $TP[\alpha, \beta]$, induced with $\F$ in such way: consider an arbitrary $A \in \F$ and for each $(\Pi, T) \in A$ we have to execute an algorithm, described above. For each $A \in \F$ denote $A_\alpha^\beta$ the restriction of $A$ on $[\alpha, \beta]$, described above.
	
	\begin{definition}
		Let $\F$ be a filter on $TP[0,1]$, $[\alpha, \beta] \subset [0,1]$. We call the filter $\F$ \textit{$[\alpha, \beta]$-complemented} if for each $A \in \F$, for every $(\Pi_1, T_1),\ (\Pi_2, T_2) \in A_\alpha^\beta$ there exists $(\Pi^*, T^*) \in TP[0, \alpha]$ and $(\Pi^{**}, T^{**}) \in TP[\beta, 1]$ such that 
		$$
		(\Pi^*, T^*) \cup (\Pi_1, T_1) \cup (\Pi^{**}, T^{**}) \in A,
		$$
		$$
		(\Pi^*, T^*) \cup (\Pi_2, T_2) \cup (\Pi^{**}, T^{**}) \in A.
		$$
	\end{definition}
	
	Here we present promised result about filter integration on subsegment.
		
	\begin{theorem}
		Let $f: [0,1] \rightarrow \R$, $\F$ be a filter on $TP[0,1]$ such that for each $[\alpha, \beta] \subset [0,1]$ $\F$ is $[\alpha, \beta]$-complemented. Let $f$ is integrated of $[0,1]$ with respect to $\F$. Then for every $[\alpha, \beta] \subset [0,1]$ $f$ is integrated on $[\alpha, \beta]$ with respect to $\F$
	\end{theorem}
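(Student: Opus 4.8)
The plan is to prove that the Riemann sums on $[\alpha,\beta]$ satisfy the Cauchy condition with respect to the induced filter $\F_{[\alpha,\beta]}$ on $\TP[\alpha,\beta]$, and then to invoke completeness of $\R$ to obtain a limit.

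I would begin with an elementary observation about filters: if $\mathcal{G}$ is a filter on a set $Y$ and $g\colon Y\to\R$ is such that for every $\eps>0$ there is $A\in\mathcal{G}$ with $\sup_{y,y'\in A}|g(y)-g(y')|<\eps$, then $\lim_{\mathcal{G}}g$ exists in $\R$. Indeed, pick $A_n\in\mathcal{G}$ of oscillation $<1/n$, put $B_n:=A_1\cap\dots\cap A_n\in\mathcal{G}$ (nonempty and decreasing), and choose $y_n\in B_n$; then $(g(y_n))_n$ is Cauchy, and its limit $I$ works, since $|g(y)-I|<\eps$ for all $y\in B_n$ once $n$ is large. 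Granting this, it is enough to show: for each $\eps>0$ there is $A^{*}\in\F_{[\alpha,\beta]}$ such that $(\Pi,T)\mapsto S(f,\Pi,T)$, viewed as a Riemann sum on $[\alpha,\beta]$, oscillates by less than $\eps$ over $A^{*}$.

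Now fix $\eps>0$ and use $f\in\Int(\F)$: there is $A\in\F$ with $|S(f,\Pi,T)-I|<\eps/2$ for all $(\Pi,T)\in A$, so any two tagged partitions in $A$ produce Riemann sums within $\eps$ of each other. Set $A^{*}:=A_\alpha^\beta\in\F_{[\alpha,\beta]}$ and take arbitrary $(\Pi_1,T_1),(\Pi_2,T_2)\in A^{*}$. The hypothesis that $\F$ is $[\alpha,\beta]$-complemented, applied to this very $A$, supplies \emph{common} end pieces $(\Pi^{*},T^{*})\in\TP[0,\alpha]$ and $(\Pi^{**},T^{**})\in\TP[\beta,1]$ with
\[
Q_i:=(\Pi^{*},T^{*})\cup(\Pi_i,T_i)\cup(\Pi^{**},T^{**})\in A,\qquad i=1,2.
\]
Provided the restriction construction behaves as intended — so that concatenating tagged partitions at the shared endpoints $\alpha,\beta$ simply concatenates the lists of subintervals, each keeping its tag and its length — one gets $S(f,Q_i)=S(f,\Pi^{*},T^{*})+S(f,\Pi_i,T_i)+S(f,\Pi^{**},T^{**})$ for $i=1,2$; subtracting, the two end pieces cancel, and since $Q_1,Q_2\in A$,
\[
|S(f,\Pi_1,T_1)-S(f,\Pi_2,T_2)|=|S(f,Q_1)-S(f,Q_2)|<\eps.
\]
Hence $S(f,\cdot)$ oscillates by less than $\eps$ on $A^{*}\in\F_{[\alpha,\beta]}$, and by the reduction above $\lim_{\F_{[\alpha,\beta]}}S(f,\Pi,T)$ exists, i.e.\ $f$ is integrable on $[\alpha,\beta]$ with respect to $\F$.

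The delicate part is exactly the claimed additivity of Riemann sums under re-gluing. One has to go through the four cases of the restriction algorithm and check that every tag falling in $(\alpha,\beta)$ is inherited by $(\Pi_i,T_i)$ together with a subinterval of unchanged length — this is what removing $\min\{\Pi\cap(\alpha,\beta)\}$ and $\max\{\Pi\cap(\alpha,\beta)\}$ is arranging — that tags at the cut points themselves cause no trouble, and that pasting the left and right pieces furnished by $[\alpha,\beta]$-complementedness back onto $(\Pi_i,T_i)$ really yields a tagged partition of $[0,1]$ with no subinterval lost or counted twice near $\alpha$ and $\beta$. A minor point to record beforehand is that $\{A_\alpha^\beta:A\in\F\}$ is a filter base, which follows from the monotonicity of the restriction operation under the standing convention that only tagged partitions $(\Pi,T)$ with $T\cap(\alpha,\beta)\neq\emptyset$ are considered.
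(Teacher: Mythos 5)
Your proof follows the paper's own argument: take $A\in\F$ on which the sums $S(f,\Pi,T)$ oscillate by less than $\eps$, pass to the restriction $A_\alpha^\beta$, and use $[\alpha,\beta]$-complementedness to attach \emph{common} end pieces on $[0,\alpha]$ and $[\beta,1]$ so that they cancel, giving the same oscillation bound for the restricted sums. The only difference is that you make explicit two steps the paper leaves implicit — the Cauchy-type criterion (small oscillation on filter sets implies existence of the filter limit in $\R$) and the additivity of $S$ under concatenation at $\alpha$ and $\beta$ — so this is the same approach, just carried out more carefully.
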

	
	\begin{proof}
		We know that for an arbitrary $\varepsilon > 0$ there exists $A \in \F$ such that for all $(\Pi_1, T_1), (\Pi_2, T_2) \in A$ we have: $|S(f, \Pi_1, T_1) - S(f, \Pi_2, T_2)| < \varepsilon$. 
		
		Let fix $\varepsilon > 0$ and consider an arbitrary $[\alpha, \beta] \subset [0,1]$. For  $A \in \F$ consider an arbitrary $(\Pi^1, T^1), (\Pi^2, T^2) \in A_\alpha^\beta$. As $\F$ is $[\alpha, \beta]$-complemented we can find $(\Pi^*, T^*) \in A_0^\alpha$ and $(\Pi^{**}, T^{**}) \in A_\beta^1$ such that $(\Pi_{11}, T_{11}) := (\Pi^*, T^*) \cup (\Pi^1, T^1) \cup (\Pi^{**}, T^{**}) \in A$ and $(\Pi_{22}, T_{22}) :=(\Pi^*, T^*) \cup (\Pi^2, T^2) \cup (\Pi^{**}, T^{**}) \in A$. Then $\varepsilon > |S(f, \Pi_{11}, T_{11}) - S(f, \Pi_{22}, T_{22})| = |S(f, \Pi^1, T^1) - S(f, \Pi^2, T^2)|$.
		
	\end{proof}
	
	\vspace{8mm}
	
	\textbf{Acknowledgment.} This paper is partially supported by a grant from Akhiezer Foundation (Kharkiv). The author is thankful to his parents for their support and his scientific adviser, professor Vladimir Kadets for his constant help with this project. Also author thanks the Defense Forces of Ukraine for the defence and fight against Russian aggressors.


\begin{thebibliography}{99}
		\bibitem{kadets} V.~Kadets, \textit{A course in Functional Analysis and Measure Theory}. Translated from the Russian by Andrei Iacob. Universitext. Cham: Springer. xxii, 539~p. (2018).
		
		\bibitem{kad_sel_comp}V. Kadets, D. Seliutin, \textit{Completeness in topological vector spaces and filters on $\mathbb{N}$}, Bulletin of the Belgian Mathematical Society, Simon
		Stevin 28(4).
		
		\bibitem{(2)listan-garcia}  List\'{a}n-Garc\'{i}a, M.C. $f$-statistical convergence, completeness and $f$-cluster points, Bull. Belg. Math. Soc. Simon Stevin, \textbf{23}, 235--245 (2016).
	\end{thebibliography}
\end{document}